\nonstopmode \numberwithin{equation}{section}
\newtheorem{thm}{Theorem}[section]
\newtheorem{lem}{Lemma}[section]
\theoremstyle{definition}
\newcounter{minutes}\setcounter{minutes}{\time}
\newcounter{hours}\setcounter{hours}{\time}
\newcounter {own}
\def\theown {\thesection       .\arabic{own}}
\newcounter{alphabet}
\begin{document}

\title{On a certain class of Starlike Functions}
\author{Md Firoz Ali}
\address{Md Firoz Ali,
National Institute of Technology Durgapur,
West Bengal-713209, India}
\email{ali.firoz89@gmail.com, fali.maths@nitdgp.ac.in}

\author{Md Nurezzaman}
\address{Md Nurezzaman,
National Institute of Technology Durgapur,
West Bengal-713209, India}
\email{nurezzaman94@gmail.com}

\subjclass[2020]{Primary 30C45, 30C55}
\keywords{analytic functions; univalent functions; starlike functions; convex functions; successive coefficients; Hankel determinant}

\def\thefootnote{}
\footnotetext{ {\tiny File:~\jobname.tex,
printed: \number\year-\number\month-\number\day,
          \thehours.\ifnum\theminutes<10{0}\fi\theminutes }
} \makeatletter\def\thefootnote{\@arabic\c@footnote}\makeatother

\begin{abstract}
Let $\mathcal{S}_u^*$ denote the class of all analytic functions $f$ in the unit disk $\mathbb{D}:=\{z\in\mathbb{C}:|z|<1\}$, normalized by $f(0)=f'(0)-1=0$ that satisfies the inequality $\left|zf'(z)/f(z)-1\right|<1$ in $\mathbb{D}$. In the present article, we obtain the sharp estimate of Hankel determinants whose entries are coefficients of $f\in\mathcal{S}_u^*$, logarithmic coefficients of $f\in\mathcal{S}_u^*$ and  coefficients  of inverse of $f\in\mathcal{S}_u^*$, respectively. We also obtain, the sharp estimate of the successive coefficients for functions in  the class $\mathcal{S}_u^*$.
\end{abstract}

\thanks{}

\maketitle
\pagestyle{myheadings}
\markboth{Md Firoz Ali and Md Nurezzaman }{On a certain class of starlike functions}

\section{Introduction}
Let $\mathcal{H}$ be the class of all analytic functions in the open unit disk $\mathbb{D}:=\{z\in\mathbb{C}:z|<1\}$ and $\mathcal{A}$ be the class of functions $f\in\mathcal{H}$ with the normalization $f(0)=f'(0)-1=0$, i.e., having the Taylor series expansion
\begin{equation}\label{Q-01}
f(z)= z+\sum_{n=z}^{\infty}a_n z^n.
\end{equation}
The subclass $\mathcal{S}$ of $\mathcal{A}$ consisting of univalent functions (that is, one-to-one) in $\mathbb{D}$ has attracted much more interest for over a century, and is a central area of research in complex analysis.
A function $f\in\mathcal{A}$ is called starlike (respectively, convex) if $f(\mathbb{D})$ is a starlike domain with respect to the origin (respectively, convex domain). The set of all starlike functions and convex functions in $\mathcal{S}$ are denoted by $\mathcal{S}^*$ and $\mathcal{C}$, respectively. It is well-known that a function $f\in\mathcal{A}$ is starlike (respectively, convex) if and only if ${\rm Re\,} \left(zf'(z)/f(z)\right)>0$ (respectively, ${\rm Re\,} \left(1+zf''(z)/f'(z)\right)>0$) for $z\in\mathbb{D}$. For further information about these classes we refer to \cite{1983-Duren, 1983-Goodman}.\\
%A function $f\in \mathcal{A}$ is close-to-convex if and only if there exists a starlike function $g\in \mathcal{S}^*$ and a real number $\alpha\in(-\pi/2,\pi/2)$ such that (see \cite{1983-Duren, 1952-Kaplan})
%\begin{align*}
%{\rm Re\,} \left(e^{i\alpha}\frac{zf'(z)}{g(z)}\right)>0,\quad z\in\mathbb{D}.
%\end{align*}
%The class of all close-to-convex function is denoted by $\mathcal{K}$. For $g(z)=z$ and $\alpha=0$ in the class $\mathcal{K}$, we have the class $\mathcal{R}$ of bounded turning functions defined as
%$$
%\mathcal{R}=\left\{f\in\mathcal{A}:{\rm Re\,} f'(z)>0,\quad z\in\mathbb{D}\right\}.
%$$

For two functions $f$ and $g$ in $\mathcal{H}$, we say that $f$ is subordinate to $g$, written as $f\prec g$ if there exists a function $\omega\in\mathcal{H}$ with $\omega(0)=0$ and $|\omega(z)|<1$ for $z\in\mathbb{D}$ such that $f(z)=g(\omega(z))$ in $\mathbb{D}$. Let $\varphi$ be an analytic univalent function with positive real part such that $\varphi(\mathbb{D})$ is symmetric with respect to the real axis and starlike with respect to $\varphi(0) = 1$ and $\varphi'(0)>0$.
%Let $\mathcal{P}$ be the subclass of $\mathcal{H}$ that consist of function $p$ in $\mathbb{D}$ such that $p(0)=1$ and $\mathrm{Re}~ p(z)>0$. A function $p\in\mathcal{P}$ has the Taylor series expansion
%\begin{align}\label{Q-05}
%p(z)=1+\sum\limits_{n=1}^{\infty}p_nz^n,\quad z\in\mathbb{D}.
%\end{align}
%Let $\mathcal{Q}$ be the subclass of $\mathcal{P}$ containing function $\varphi$ such that $\varphi(\mathbb{D})$ is symmetric with respect to the real axis and starlike with respect to $1$.
For such $\varphi$, Ma and Minda \cite{1992-Ma-Minda} introduced the classes $\mathcal{S}^*(\varphi)$ and $\mathcal{C}(\varphi)$ as
\begin{align*}
\mathcal{S}^*(\varphi)=\left\{f\in\mathcal{A}:\frac{zf'(z)}{f(z)}\prec\varphi(z)\right\}
\end{align*}
and
\begin{align*}
\mathcal{C}(\varphi)=\left\{f\in\mathcal{A}:1+\frac{zf''(z)}{f'(z)}\prec\varphi(z)\right\}.
\end{align*}
One can easily find that $f\in\mathcal{C}(\varphi)$ if and only if $zf'\in\mathcal{S}^*(\varphi)$. For different choices of $\varphi$, several well-known classes can be easily obtained from these classes which were earlier studied due to their geometric and analytic properties. For instance, $\mathcal{S}^*((1+(1-2\alpha)z)/(1-z)):=\mathcal{S}^*(\alpha)$ and $\mathcal{C}((1+(1-2\alpha)z)/(1-z)):=\mathcal{C}(\alpha)$ are the classes of starlike and convex functions of order $\alpha$, $0\le\alpha<1$, respectively. In particular, $\mathcal{S}^*(0)=\mathcal{S}^*$ and $\mathcal{C}(0)=\mathcal{C}$.\\
%Now for a different choice of the function $\varphi$ in the above class, we get many subclasses of $\mathcal{A}$. Particularly, if we choose $(1+z)/(1-z)~\text{and}~(1+(1-2\alpha)z)/(1-z)$ in place of $\varphi$ then the class $\mathcal{S}^*(\varphi)$ reduces to the class $\mathcal{S}^*$ of starlike function, the class $\mathcal{S}^*(\alpha)$ of starlike function of order $\alpha$ with $0\le\alpha<1$ due to Robertson \cite{1936-Robertson}. Also note that $\mathcal{S}^*(0)=\mathcal{S}^*$. On the other hand, $\mathcal{C}\left((1+z)/(1-z)\right)$ and $\mathcal{C}\left((1+(1-2\alpha)z)/(1-z)\right)$ is the class $\mathcal{C}$ of convex function and the class $\mathcal{C}(\alpha)$ convex function of order $\alpha$ for $0\le\alpha<1$.\\
% A function $f\in\mathcal{A}$ is said to be close-to-convex if the complement of the image-domain $f(\mathbb{D})$ in $\mathbb{C}$ is the union of rays that are disjoint (except that the origin of one ray may lie on another one of the rays) and the class of all close-to-convex functions is denoted by $\mathcal{K}$, which was first introduced by Kaplan \cite{1952-Kaplan}. A function $f\in \mathcal{K}$ if there exists a starlike function $g$ such that (see \cite{1983-Duren, 1952-Kaplan})
%\begin{align*}
%{\rm Re\,} \left(e^{i\alpha}\frac{zf'(z)}{g(z)}\right)>0,\quad -\frac{\pi}{2}<\alpha<\frac{\pi}{2},~z\in\mathbb{D}.
%\end{align*}\\

In the present article, we are interested in the class $\mathcal{S}_u^*$, consisting of functions $f\in\mathcal{A}$ such that
\begin{align}\label{Q-10}
\left|\frac{zf'(z)}{f(z)}-1\right|<1\quad \text{for}~z\in\mathbb{D}.
\end{align}
The class $\mathcal{S}_u^*$ was introduced and studied by Singh \cite{1968-Singh}. It is easy to see that every function in $\mathcal{S}_u^*$ is also belongs to $\mathcal{S}^*$ and $\mathcal{S}_u^*=\mathcal{S}^*(\varphi)$ for $\varphi(z)=1+z$. Singh \cite{1968-Singh,1974-Singh-Singh} obtained the sharp distortion theorem, estimate of the Fekete-Szeg\"{o} functional and radius of convexity for functions in $\mathcal{S}_u^*$. Further, the author in \cite{1968-Singh} proved that if $f\in \mathcal{S}_u^*$, then
\begin{align}\label{Q-15}
|a_n|\le \frac{1}{n-1} ~~\text{for all}~~n\ge 2
\end{align}
and the equality holds for the functions $f_n(z)=ze^{\frac{z^{n-1}}{n-1}}$, $n=2,3,\cdots$.\\

For $q,n\in\mathbb{N}$, Pommerenke \cite{1966-Pommerenke, 1967-Pommerenke} defined the $q$-th order Hankel determinant for functions $f$ of the form $f(z)= \sum_{n=1}^{\infty}a_n z^n$ as
$$
H_{q,n}(f)=
\begin{vmatrix}
a_n&a_{n+1}& \cdots &a_{n+q-1}\\
a_{n+1}&a_{n+2}& \cdots &a_{n+q}\nonumber\\
\vdots&\vdots& \cdots & \vdots\nonumber\\
a_{n+q-1}& a_{n+q}& \cdots & a_{n+2(q-1)}
\end{vmatrix}.
$$
In particular, for $f\in \mathcal{A}$, one can easily see that $H_{2,1}(f)=a_3-a_2^2$,
\begin{align}\label{Q-20}
H_{2,2}(f)=\begin{vmatrix}
a_2 & a_3\\
a_3 &a_4
\end{vmatrix}=a_2a_4-a_3^2
\end{align}
and
\begin{align}\label{Q-25}
H_{3,1}(f)=
\begin{vmatrix}
a_1 & a_2 & a_3\\
a_2 & a_3 & a_4\\
a_3 & a_4 & a_5
\end{vmatrix}
=(a_3a_5-a_4^2)-a_2(a_2a_5-a_3a_4)+a_3(a_2a_4-a_3^2).
\end{align}
The logarithmic coefficients $\gamma_n$ of a function $f\in \mathcal{A}$ of the form \eqref{Q-01} are defined by
\begin{align}\label{Q-30}
\mathrm{F}_f(z)=\log\frac{f(z)}{z}=2\sum\limits_{n=1}^{\infty}\gamma_nz^n.
\end{align}
%As we have,
%\begin{align}\label{Q-150}
%\log\frac{f(z)}{z}=&\log{\left(1+\sum\limits_{n=2}^{\infty}a_nz^n\right)}\nonumber\\
%=&\sum\limits_{n=2}^{\infty}a_nz^{(n-1)}-\frac{\left(\sum\limits_{n=2}^{\infty}a_nz^{(n-1)}\right)^2}{2}+\frac{\left(\sum\limits_{n=2}^{\infty}a_nz^{(n-1)}\right)^3}{3}+\cdots
%\end{align}
Equating the coefficients of $z$, $z^2$ and $z^3$ in both sides, we obtain
\begin{align}\label{Q-35}
\gamma_1=\frac{a_2}{2},\quad \gamma_2=\frac{1}{2}\left(a_3-\frac{a_2^2}{2}\right),\quad \gamma_3=\frac{1}{2}\left(a_4-a_2a_3+\frac{a_2^3}{3}\right).
\end{align}
The second order Hankel determinant of $\mathrm{F}_f/2$ is define by
\begin{align}\label{Q-40}
H_{2,1}\left(\frac{\mathrm{F}_f}{2}\right)=\gamma_1\gamma_3-\gamma_2^2=\frac{1}{4}\left(a_2a_4-a_3^2+\frac{1}{12}a_2^4\right),
\end{align}
i.e.,
\begin{align}\label{Q-45}
4H_{2,1}\left(\frac{\mathrm{F}_f}{2}\right)=H_{2,2}(f)+\frac{1}{12}a_2^4.
\end{align}

The Koebe's $1/4$ theorem guarantees that for every $f\in\mathcal{S}$, there exist an inverse function $f^{-1}$ at least on the disk $\mathbb{D}_{1/4}=\{w:|w|<1/4\}$ of the form
\begin{equation}\label{Q-50}
f^{-1}(w)=w+A_2w^2+A_3w^3+\cdots.
\end{equation}
Then by using the identity $f(f^{-1}(w))=w,$ from \eqref{Q-01} and \eqref{Q-50}, we have the coefficient relation
\begin{align*}
A_2=-a_2,~A_3=-a_3+2a_2^2,~A_4=-a_4+5a_2a_3-5a_2^3.
\end{align*}
In this case, the second order Hankel determinant for coefficients of $f^{-1}$ is defined by
\begin{align}\label{Q-55}
H_{2,2}(f^{-1})&=A_2A_4-A_3^2=a_2a_4-a_2^2a_3+a_2^4-a_3^2,
\end{align}
i.e.,
\begin{align}\label{Q-60}
H_{2,2}(f^{-1})=H_{2,2}(f)-a_2^2a_3+a_2^4.
\end{align}

In recent years, estimate of the Hankel determinant whose entries are coefficients of functions of certain class of analytic and univalent functions has been studied by many researchers. Janteng et al. \cite{2007-Janteng-Halim-Darus} obtained sharp estimate of $|H_{2,2}(f)|$ for functions in the classes $\mathcal{S}^*$ and $\mathcal{C}$.
%Zaprawa \cite{2016-Zaprawa} obtained the sharp bound of $|H_{2,2}(f)|$ for typically real functions.
In $2018$, Cho et al. \cite{2018-Cho} obtained the sharp bounds of $|H_{2,1}(f)|$ and $|H_{2,2}(f)|$ for functions in the class $\mathcal{S}^*(\alpha)$. In $2018$, Kowalczyk et al. \cite{2018-Kowalczyk-Lecko-Sim} obtained the sharp inequality $|H_{3,1}(f)|\le4/135$ for $f\in\mathcal{C}$. Lecko et al. \cite{2019-Lecko-Sim-Smiarowska} obtained the sharp estimate of $|H_{3,1}(f)|$ for functions in the class $\mathcal{S}^*(1/2)$.
%In $2022$, Kumar and Verma \cite{2022-SIVAPRASAD} obtained the sharp estimate of $|H_{3,1}(f)|$ for functions in the classes $\mathcal{S}^*(e^z)$ and $\mathcal{C}(e^z)$.
In $2022$, Kowalczyk et al. \cite{2022-Kowalczyk-Thomas} proved the sharp estimate $|H_{3,1}(f)|\le4/9$ for $f\in\mathcal{S}^*$. In $2022$, Kowalczyk et al. \cite{2022-Kowalczyk-Lecko-2} obtained the sharp bound of $|H_{2,1}\left(F_f/2\right)|$ for functions in the classes $\mathcal{S}^*(\alpha)$ and $\mathcal{C}(\alpha)$. Recently, Sim et al. \cite{2022-Sim-Thomas} obtained the sharp bound $|H_{2,2}(f)|$ for functions in the class $\mathcal{C}(\alpha)$ and also obtained the sharp bounds of $|H_{2,2}(f^{-1})|$ for functions in the classes $\mathcal{S}^*(\alpha)$ and $\mathcal{C}(\alpha)$. For more information about the Hankel determinant on  different subclasses of $\mathcal{S}$, we refer to \cite{ 2023-Allu-Arora-Shaji, 2023-Allu-Lecko-Thomas, 2019-N. E. Cho, 2021-Kowalczyk-Lecko,2022-Kowalczyk-Lecko-1, 2018-Kowalczyk-Lecko-Lecko-Sim, 2021-Sim-Zaprawa, 2023-Raza, 2023-Riaz-Raza, 2017-Zaprawa, 2018-Zaprawa, 2021-Zaprawa, 2024-Zaprawa}.\\

The estimation of the successive coefficient $|a_{n+1}|-|a_n|$ plays an important role for general coefficient problem in univalent function theory (see \cite{1983-Duren}). For functions in $\mathcal{S}$, the best estimate till date is
\begin{align}\label{Q-70}
\left||a_{n+1}|-|a_n|\right|\leq A \quad(n=1,2,3,\ldots)
\end{align}
with $A=3.61$ (see \cite{1976-Grinspan}). In 1978, Leung {\cite{1978-Leung}} proved that $A=1$ for $f\in \mathcal{S}^*$. In $2016$, Li and Sugawa \cite{2016-Li-Sugawa} obtained the sharp upper bound of $|a_{n+1}|-|a_n|$ for any $n\in\mathbb{N}$ and the sharp lower bound of $|a_{n+1}|-|a_n|$ for $n=2,3$ for functions in the class $\mathcal{C}$. In $2019$, Arora et al. \cite{2019-V. Arora} estimated the moduli of $|a_{n+1}|-|a_n|$, for functions of $\gamma$-spirallike functions of order $\alpha$.\\

In the present article, we first prove the sharp  estimate of Hankel determinant of  second and third order for functions in the class $\mathcal{S}_u^*$. Also, we obtain the sharp estimate of second order Hankel determinant of logarithmic and inverse coefficients for functions in $\mathcal{S}_u^*$. Finally, we obtain the sharp estimate of the successive coefficients for functions in $\mathcal{S}_u^*$.

\section{Main Results}
Before stating our main results we will discuss some preliminary results, which will help us to prove our results. Let $\mathcal{P}$ denote the class of all analytic functions $p$ in $\mathbb{D}$ with $p(0)=1$ satisfying ${\rm Re\,} p(z)>0$ for $z\in\mathbb{D}$. The following lemma due to Pommerenke \cite{1975-Pommerenke}, Libera and Z\l otkiewicz \cite{1982-Libera-Zlotkiewick,1983-Libera-Zlotkiewick} and Kwon and Lecko \cite{2018-Kwon-Lecko} will be useful.

\begin{lem}\label{Q-75}
If $p\in\mathcal{P}$ is of the form \eqref{Q-15}, then there exist $z_1,z_2,z_3,z_4\in \overline{\mathbb{D}}$ such that
\begin{align}\label{Q-80}
\begin{cases}
p_1&=2z_1,\\
p_2&=2z_1^2+2(1-|z_1|^2)z_2,\\
p_3&=2z_1^3+4(1-|z_1|^2)z_1z_2-2(1-|z_1|^2)z_1z_2^2+2(1-|z_1|^2)(1-|z_2|^2)z_3,\\
p_4&=2z_1^4+2(1-|z_1|^2)(z_1^2z_2^2-3z_1^2z_2+3z_1^2+z_2)z_2\\&\quad +2(1-|z_1|^2)(1-|z_2|^2)(2z_1-2z_1z_2-\bar{z_2}z_3)z_3\\
           &\quad+2(1-|z_1|^2)(1-|z_2|^2)(1-|z_3|^2)z_4.
\end{cases}
\end{align}
\end{lem}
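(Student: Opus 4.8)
The plan is to reduce the statement to the classical Schur algorithm for self-maps of the disk and then to expand to fourth order. \textbf{Step 1 (passage to the Schur class).} Since $p\in\mathcal{P}$ satisfies $p(0)=1$ and $\mathrm{Re}\,p>0$, the Cayley map $w\mapsto(w-1)/(w+1)$ sends the right half-plane onto $\mathbb{D}$, so
\[
\omega(z)=\frac{p(z)-1}{p(z)+1}
\]
is analytic, maps $\mathbb{D}$ into $\mathbb{D}$, and fixes the origin. Inverting gives $p=(1+\omega)/(1-\omega)=1+2\sum_{k\ge1}\omega^{k}$, and comparing Taylor coefficients writes each $p_n$ as an explicit polynomial in the coefficients $c_1,\dots,c_n$ of $\omega(z)=\sum_{k\ge1}c_kz^{k}$; concretely $p_1=2c_1$, $p_2=2(c_1^{2}+c_2)$, $p_3=2(c_1^{3}+2c_1c_2+c_3)$ and $p_4=2(c_1^{4}+3c_1^{2}c_2+c_2^{2}+2c_1c_3+c_4)$.

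\textbf{Step 2 (Schur parametrization).} Because $\omega(0)=0$, the Schwarz lemma gives $\omega(z)=z\,\psi_0(z)$ with $\psi_0$ again a Schur function; put $z_1=\psi_0(0)\in\overline{\mathbb{D}}$, so that $c_1=z_1$. The Schur algorithm then generates Schur functions through
\[
\psi_{k}(z)=\frac{z_{k+1}+z\,\psi_{k+1}(z)}{1+\overline{z_{k+1}}\,z\,\psi_{k+1}(z)},\qquad z_{k+1}=\psi_{k}(0)\in\overline{\mathbb{D}},
\]
each step being a disk automorphism followed by division by $z$, an operation that preserves the Schur class and supplies exactly the parameters $z_1,z_2,z_3,z_4\in\overline{\mathbb{D}}$ of the statement (the closed disk being needed to accommodate the degenerate, finite-Blaschke-product cases, where some $|z_k|=1$).

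\textbf{Step 3 (expansion).} Substituting the recursion for $\psi_0$ and expanding $\omega=z\psi_0$ to order $z^{4}$ expresses $c_1,c_2,c_3,c_4$ as polynomials in $z_1,z_2,z_3,z_4$ and the conjugates $\overline{z_1},\overline{z_2},\overline{z_3}$; the factors $1-|z_1|^{2}$, $1-|z_2|^{2}$, $1-|z_3|^{2}$ emerge automatically from the denominators $1+\overline{z_{k+1}}\,z\,\psi_{k+1}$ through the geometric series. Feeding these $c_k$ into the formulas of Step~1 and collecting terms reproduces the four displayed identities for $p_1,p_2,p_3,p_4$.

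I expect the only genuine obstacle to be the bookkeeping in the fourth-order computation: the value of $p_4$ mixes products of $z_1,z_2,z_3,z_4$ with the conjugates $\overline{z_2},\overline{z_3}$, so one must track the nested expansions of the denominators carefully in order to gather every coefficient of $z^{4}$ without error. A convenient consistency check is that forcing $|z_1|=1$ (respectively $|z_2|=1$) collapses the parametrization onto the lower-order cases, confirming that the factors $1-|z_k|^{2}$ sit in the correct positions; the formulas for $p_1,p_2,p_3$ can moreover be matched against the classical Carath\'eodory and Libera--Z\l otkiewicz expressions.
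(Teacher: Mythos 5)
The paper never proves this lemma: it is quoted verbatim from Pommerenke, Libera--Z\l otkiewicz and Kwon--Lecko, so there is no internal proof to compare against, and your Cayley-transform-plus-Schur-algorithm derivation is essentially the standard argument behind those cited results. Your Step 1 is correct ($p_1=2c_1$, $p_2=2(c_1^2+c_2)$, $p_3=2(c_1^3+2c_1c_2+c_3)$, $p_4=2(c_1^4+3c_1^2c_2+c_2^2+2c_1c_3+c_4)$), and Step 2 is the right device, including the remark that the degenerate cases $|z_k|=1$ (where the algorithm terminates) are absorbed by the factors $1-|z_k|^2$.

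The genuine gap is in Step 3, where you assert the expansion ``reproduces the four displayed identities.'' It does not: carried out correctly, the recursion gives $c_3=(1-|z_1|^2)(1-|z_2|^2)z_3-\bar{z_1}(1-|z_1|^2)z_2^2$, hence
\begin{equation*}
p_3=2z_1^3+4(1-|z_1|^2)z_1z_2-2(1-|z_1|^2)\bar{z_1}z_2^2+2(1-|z_1|^2)(1-|z_2|^2)z_3,
\end{equation*}
with $\bar{z_1}$ where the lemma displays $z_1$, and similarly $p_4$ acquires factors $\bar{z_1}$ and $\bar{z_1}^2$. This is not cosmetic: the unconjugated formulas as displayed are false for non-real $z_1$. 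Indeed, take $\omega(z)=z(i/2+z)/(1-(i/2)z)$, i.e.\ Schur parameters $z_1=i/2$, $z_2=1$; then $p_1=i$, $p_2=1$, $p_3=2i$, whereas the displayed formulas force $z_1=i/2$, $z_2=1$ and then give $p_3=i/2$ no matter what $z_3$ is, since the $z_3$-term carries the factor $1-|z_2|^2=0$. So no amount of careful bookkeeping can terminate at the identities as written; what your argument proves is the conjugated version (which is what the cited literature states and what the paper actually uses, since it always normalizes $p_1=2z_1\ge0$ by rotation, making $z_1=\bar{z_1}$). You must either prove and state the conjugated formulas, or add the normalization $z_1\ge0$ explicitly. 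Finally, since the whole content of the lemma at fourth order \emph{is} the $p_4$ identity, the computation you defer as ``bookkeeping'' is the proof itself and has to be carried out; it amounts to inserting $c_4=(1-|z_1|^2)\bigl[(1-|z_2|^2)(1-|z_3|^2)z_4-\bar{z_2}(1-|z_2|^2)z_3^2\bigr]-2\bar{z_1}(1-|z_1|^2)(1-|z_2|^2)z_2z_3+\bar{z_1}^2(1-|z_1|^2)z_2^3$ into your Step 1 formula.
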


If $f\in \mathcal{S}_u^*$ is of the form \eqref{Q-01} then $f_\theta(z)=e^{-i\theta}f(e^{i\theta} z)$, $\theta\in\mathbb{R}$, called the rotation of $f$, is also belongs to $\mathcal{S}_u^*$. Thus the class $\mathcal{S}_u^*$ is rotationally invariant. Further, it is an easy exercise to verify that
\begin{align*}
&|H_{2,2}(f)|=|H_{2,2}(f_\theta)|,~~ |H_{3,1}(f)|=|H_{3,1}(f_\theta)|,\\[2mm]
& \left|H_{2,2}(f^{-1})\right|=\left|H_{2,2}(f_\theta^{-1})\right|, \left|H_{2,1}\left(\frac{\mathrm{F}_f}{2}\right)\right|=\left|H_{2,1}\left(\frac{\mathrm{F}_{f_\theta}}{2}\right)\right|.
\end{align*}
Therefore, the functionals $|H_{2,2}(f)|, |H_{2,2}(f)|, |H_{2,2}(f^{-1})|$ and $|H_{2,1}(\frac{\mathrm{F}_f}{2})|$ are also rotationally invariant.\\

If $f\in \mathcal{S}_u^*$ then there exists a function $p\in\mathcal{P}$ such that
\begin{equation*}
\frac{zf'(z)}{f(z)}-1=\frac{p(z)-1}{p(z)+1}.
\end{equation*}
 Comparing the coefficients of $z^n,~n=2,3,4$ and $5$, we get the following relations
\begin{equation}\label{Q-85}
a_2=\frac{p_1}{2},\quad a_3=\frac{p_2}{4},\quad a_4=\frac{p_3}{6}-\frac{p_1p_2}{24},\quad a_5=\frac{1}{8}\left(p_4-\frac{p_2^2}{4}-\frac{p_1p_3}{3}+\frac{p_1^2p_2}{12}\right).
\end{equation}

%\begin{lem}\label{Q-90}
%Let $f\in\mathcal{S}_u^*$ be of the form \eqref{Q-01}. Then
%\begin{align*}
%\Theta_1(f)=&|a_2a_4-a_3^2|,\\\Theta_2(\frac{\mathrm{F}_f}{2})=&\frac{1}{4}\left|a_2a_4-a_3^2+\frac{1}{12}a_2^4\right|,\\\Theta_2(f^{-1})=&\left|a_2a_4-a_2^2a_3+a_2^4-a_3^2\right|,\\\Theta_4(f)=&|(a_3a_5-a_4^2)-a_2(a_2a_5-a_3a_4)+a_3(a_2a_4-a_3^2)|
%\end{align*}
%are rotationally invariant $i.e.$, $$
%\Theta_1(f)=\Theta_1(f_\theta),~\Theta_2(\frac{\mathrm{F}_f}{2})=\Theta_2(\frac{\mathrm{F}_{f_\theta}}{2}),~\Theta_3(f^{-1})=\Theta_3(f^{-1}_\theta)~ \text{and}~ \Theta_4(f)=\Theta_4(f_\theta)
%$$
%for every $\theta\in\mathbb{R}$.
%\end{lem}
%Thus Lemma \ref{Q-90} makes it possible to assume that when estimating $\Theta_1(f),~\Theta_2(\frac{\mathrm{F}_f}{2}),~\Theta_2(f^{-1})$ and $\Theta_4(f)$, one selected coefficient of $f$ is a non-negative real number.\\

%%%%%%%%%%%%%%%%%%%%%%%%%%%%%%%%%%%%%%%%%%%%%%%%%%%%%%%%%%%%%%%%%%%%%%%%%%%%%%%%%%%%%%%%%%%%%%%%%%%%%%%%%%%%%%%%%%%%%%%%%%%
If $f\in \mathcal{S}_u^*$ is of the form \eqref{Q-01}, then from (\ref{Q-85}) and using \cite[Lemma 1]{1992-Ma-Minda}, we have
\begin{align*}
|H_{2,1}(f)|=&|a_3-a_2^2|=\frac{1}{4}\left|p_2-p_1^2\right|\le\frac{1}{2}.
\end{align*}
The equality occurs in the above estimate for the function $f_2(z)=ze^z$. In the following theorem we obtain the sharp bound for the Hankel determinant $|H_{2,2}(f)|$ for functions in the class $\mathcal{S}_u^*$.

%\begin{thm}
%Let $f\in \mathcal{S}_u^*$ be of the form \eqref{Q-01}. Then the sharp inequality
%$$\left|H_{2,1}(f)\right|\leq\frac{1}{2}.$$
%\end{thm}
%\begin{proof}
%Let $f\in \mathcal{S}_u^*$ be of the form \eqref{Q-01}. Since the class $\mathcal{S}_u^*$ is rotationally invariant, without loss of generality we take $a_2=\frac{p_1}{2}=z_1\ge0$. The second order Hankel determinant for coefficients of $f$ is
%$$H_{2,1}(f)=
%\begin{vmatrix}
%a_1 & a_2\\
%a_2 &a_3
%\end{vmatrix}
%=a_1a_3-a_2^2=a_3-a_2^2.
%$$\\
%Using the relations in \eqref{Q-90}, we obtain
%\begin{align*}
%|H_{2,1}(f)|=&|a_3-a_2^2|=\left|\frac{p_2}{4}-\frac{p_1^2}{4}\right|.
%\end{align*}
%Therefore, from Lemma \ref{Q-75}, there exists $z_1,z_2\in\mathbb{D}$ such that
%\begin{align*}
%|H_{2,1}(f)|=&\frac{1}{4}|2z_1^2+2(1-z_1^2)z_2-(2z_1)^2|\nonumber\\\leq& \frac{1}{4}|2(1-z_1^2)|z_2|+2z_1^2|\le\frac{1}{2}.
%\end{align*}
%The equality occurs for the function $f_0(z)=ze^z$.
%\end{proof}

\begin{thm}
Let $f\in \mathcal{S}_u^*$ be of the form \eqref{Q-01}. Then
$$
\left|H_{2,2}(f)\right|\leq\frac{1}{4}
$$
and the estimate is sharp.
\end{thm}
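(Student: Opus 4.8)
The plan is to translate the functional into the coefficients of the associated Carath\'eodory function and then reduce everything to a small real optimization. Writing $zf'(z)/f(z)-1=(p(z)-1)/(p(z)+1)$ for some $p\in\mathcal{P}$ and substituting the relations \eqref{Q-85} into the determinant \eqref{Q-20}, a direct computation gives
\[
H_{2,2}(f)=a_2a_4-a_3^2=\frac{p_1p_3}{12}-\frac{p_1^2p_2}{48}-\frac{p_2^2}{16}.
\]
Since $|H_{2,2}(f)|$ is rotationally invariant, I would first use a rotation to arrange that $p_1$ is real and nonnegative, so that $z_1=p_1/2=:c\in[0,1]$ in the parametrization of Lemma \ref{Q-75}.

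Next I would insert the formulas for $p_1,p_2,p_3$ from \eqref{Q-80} (with $z_1=c$ and the abbreviation $t:=1-c^2$) into the expression above. The decisive structural feature, which I expect to carry the proof, is that all terms linear in $z_2$ cancel identically, leaving
\[
H_{2,2}(f)=-\frac{c^4}{12}-\Bigl(\frac{tc^2}{3}+\frac{t^2}{4}\Bigr)z_2^2+\frac{tc\,(1-|z_2|^2)}{3}\,z_3.
\]
Applying the triangle inequality together with $|z_3|\le1$ and setting $x:=|z_2|\in[0,1]$ then yields $|H_{2,2}(f)|\le\Phi(c,x)$, where, after collecting the $x^2$ terms,
\[
\Phi(c,x)=\frac{c^4}{12}+\frac{tc}{3}+\frac{t}{12}(c-1)(c-3)\,x^2 .
\]

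The remaining step is to maximize $\Phi$ over the square $[0,1]\times[0,1]$. Because $t\ge0$ and $(c-1)(c-3)\ge0$ for $c\in[0,1]$, the coefficient of $x^2$ is nonnegative, so $\Phi$ increases in $x$ and the maximum is attained at $x=1$. Substituting $x=1$ and $t=1-c^2$ collapses the expression to $\Phi(c,1)=(3-2c^2)/12$, which is maximal at $c=0$ with value $1/4$; hence $|H_{2,2}(f)|\le 1/4$. For sharpness I would use the function $f_3(z)=z e^{z^2/2}$, the $n=3$ member of the extremal family in \eqref{Q-15}, corresponding to $p(z)=(1+z^2)/(1-z^2)$ and thus to the extremal data $c=0$, $|z_2|=1$; for it $a_2=a_4=0$ and $a_3=1/2$, so $H_{2,2}(f_3)=-1/4$ and the estimate is attained. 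I expect the main obstacle to be the bookkeeping in the substitution, specifically verifying the cancellation of the linear $z_2$ terms and the factorization of the $x^2$ coefficient; once these are secured, the sign analysis and the one-variable maximization are routine.
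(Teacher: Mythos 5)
Your proposal is correct and follows essentially the same route as the paper's proof: the same reduction via the Carath\'eodory coefficients \eqref{Q-85}, the same parametrization from Lemma \ref{Q-75} (your expression with $t=1-c^2$ agrees with the paper's, since $\frac{tc^2}{3}+\frac{t^2}{4}=\frac{(3+c^2)(1-c^2)}{12}$), the same triangle-inequality bound collapsing to $(3-2c^2)/12\le 1/4$, and the same extremal function $f_3(z)=ze^{z^2/2}$. The cancellation of the linear $z_2$ terms and the factorization of the $x^2$ coefficient that you flag as potential obstacles do indeed check out.
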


\begin{proof}
Let $f\in \mathcal{S}_u^*$ be of the form \eqref{Q-01}. Then using the relations \eqref{Q-85}, we obtain
\begin{align*}
|H_{2,2}(f)|&=|a_2a_4-a_3^2|\\&=\frac{1}{48}|4p_1p_3-p_1^2p_2-3p_2^2|.
\end{align*}
Since the functional $|H_{2,2}(f)|$ is rotationally invariant, without loss of generality we may take $a_2=\frac{p_1}{2}\ge0$. On the other hand, from Lemma \ref{Q-75}, there exists $z_1,z_2,z_3\in\mathbb{\overline{D}}$ with $z_1\ge 0$ such that
\begin{align*}
|H_{2,2}(f)|&=\frac{1}{12}\left|4(1-|z_2|^2)(1-z_1^2)z_1z_3-z_1^4-(3+z_1^2)(1-z_1^2)z_2^2\right|\\
&\le\frac{1}{12}\left(4(1-|z_2|^2)(1-z_1^2)z_1|z_3|+z_1^4+(3+z_1^2)(1-z_1^2)|z_2|^2\right)
\\&\le\frac{1}{12}\left(4(1-|z_2|^2)(1-z_1^2)z_1+z_1^4+(3+z_1^2)(1-z_1^2)|z_2|^2\right)\\&=\frac{1}{12}\left(z_1^4+(1-z_1^2)(4z_1+(z_1-3)(z_1-1)|z_2|^2)\right)\\
&\le\frac{1}{12}\left(z_1^4+(1-z_1^2)(4z_1+(z_1-3)(z_1-1))\right)\\&=\frac{3-2z_1^2}{12}\leq\frac{1}{4}.
\end{align*}
The estimate is sharp for the function  $f_3(z)=ze^\frac{z^2}{2}$.

\end{proof}
%%%%%%%%%%%%%%%%%%%%%%%%%%%%%%%%%%%%%%%%%%%%%%%%%%%%%%%%%%%%%%%%%%%%%%%%%%%%%%%%%%%%%%%%%%%%%%%%%%%%%%%%%%%%%%%%%%%%%%%%%%%%%
\begin{thm}
Let $f\in \mathcal{S}_u^*$ be of the form \eqref{Q-01}. Then
$$\left|H_{3,1}(f)\right|\leq\frac{1}{9}$$
and the estimate is sharp.
\end{thm}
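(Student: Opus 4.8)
The plan is to push the same representation-and-reduction strategy used above for $|H_{2,2}(f)|$ through the full third-order determinant. First I would expand \eqref{Q-25} as
$H_{3,1}(f)=a_3a_5-a_4^2-a_2^2a_5+2a_2a_3a_4-a_3^3$
and substitute the coefficient formulas \eqref{Q-85}, so that $H_{3,1}(f)$ becomes a polynomial in $p_1,p_2,p_3,p_4$. Because $|H_{3,1}(f)|$ is rotationally invariant, I may normalize $a_2=p_1/2\ge 0$, i.e.\ take $z_1=t\in[0,1]$, and then apply Lemma \ref{Q-75} to replace $p_1,p_2,p_3,p_4$ by their expressions in $t$ and $z_2,z_3,z_4\in\overline{\mathbb{D}}$.

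The reduction would then proceed by eliminating variables in order of increasing difficulty. The coefficient of $z_4$ is a multiple of $(1-t^2)(1-|z_2|^2)(1-|z_3|^2)$, so the triangle inequality together with $|z_4|\le 1$ removes $z_4$, replacing it by $1$. What remains depends on $t$, on $|z_2|$ and $|z_3|$, and on the arguments of $z_2,z_3$. The essential step is to control this argument dependence: as the $t=0$ computation indicates, the extremal configuration lines up the phases so that the negative real contributions reinforce one another, which suggests bounding each phase-carrying term by its modulus and thereby collapsing the problem to a function $\Phi(t,x,y)$ of the real variables $t$, $x=|z_2|$, $y=|z_3|\in[0,1]$.

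The final step is to show $\max\Phi=1/9$ over $[0,1]^3$. The slice $t=0$ already attains the bound: there $a_2=0$, and one checks directly that the maximum over $x,y$ is $1/9$, occurring at $x=0$, $y=1$. The whole content of the theorem is then to verify that no configuration with $t>0$ exceeds this; I expect to split the analysis into ranges of $t$ (a neighborhood of $0$ versus $t$ bounded away from $0$) and estimate $\Phi$ separately on each, since a single global estimate is unlikely to be clean. For sharpness, the extremal function is $f(z)=z\exp(z^3/3)$, corresponding to $p(z)=(1+z^3)/(1-z^3)$; for it $a_2=a_3=a_5=0$ and $a_4=1/3$, so $H_{3,1}(f)=-a_4^2=-1/9$. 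The main obstacle is precisely the phase handling in the second paragraph and the ensuing multivariable optimization, where a crude triangle inequality is typically too lossy and a careful case analysis on $t$ is required to keep the bound tight at $1/9$.
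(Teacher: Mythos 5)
Your setup is identical to the paper's: expand \eqref{Q-25}, substitute \eqref{Q-85}, normalize $a_2=p_1/2\ge 0$ by rotational invariance, parametrize $p_1,\dots,p_4$ through Lemma \ref{Q-75}, remove $z_4$ by the triangle inequality, and verify sharpness with $f(z)=ze^{z^3/3}$ (your computation $H_{3,1}(f)=-a_4^2=-1/9$ is correct, as is the identification $p(z)=(1+z^3)/(1-z^3)$). The genuine gap is that your proposal stops exactly where the real work begins. You never write down the function $\Phi(t,x,y)$, and the assertion that its maximum over $[0,1]^3$ equals the value attained on the slice $t=0$ \emph{is} the content of the theorem; deferring it to an unspecified ``case analysis on ranges of $t$'' is a plan, not an argument. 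In the paper this step occupies most of the proof: after bounding each phase-carrying term of the decomposition $H_{3,1}(f)=\frac{1}{144}\left[h_1+h_2z_3+h_3z_3^2+(1-|z_3|^2)h_4z_4\right]$ by its modulus, one obtains an explicit polynomial $\Gamma(x,y,u)=\Gamma_1+\Gamma_4+\Gamma_2u+(\Gamma_3-\Gamma_4)u^2$ with $x=z_1$, $y=|z_2|$, $u=|z_3|$, and the maximization is organized not by ranges of $t$ but by the sign of $\Gamma_3-\Gamma_4$ (the coefficient of $u^2$): in each case $u$ is eliminated, the resulting two-variable polynomials $R(x,y)$ and $S(x,y)$ are shown to have no admissible critical points in the open square (in the second case the lone critical point violates the case hypothesis), and four boundary subcases for each yield the maximum $16$, whence $16/144=1/9$.

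Relatedly, the ``main obstacle'' you flag --- that a term-by-term triangle inequality is ``typically too lossy'' --- turns out not to be an obstacle here, but you cannot know that without carrying out the optimization. The paper's proof shows the crude modulus bound is already tight: $\Gamma$ attains $16$ at $(x,y,u)=(0,0,1)$, i.e.\ $z_1=z_2=0$, $|z_3|=1$, which is precisely the configuration of the extremal function (for which $p_1=p_2=p_4=0$, $p_3=2$). So no finer phase bookkeeping is required; what is required, and what is missing from your proposal, is the explicit two-case polynomial maximization over $[0,1]^3$.
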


\begin{proof}
Let $f\in \mathcal{S}_u^*$ be of the form \eqref{Q-01}. Then using the relations  \eqref{Q-85}, we obtain
\begin{align}\label{Q-95}
H_{3,1}(f)
&=(a_3a_5-a_4^2)-a_2(a_2a_5-a_3a_4)+a_3(a_2a_4-a_3^2)\\
&=\frac{p_2}{32}\left(p_4-\frac{p_2^2}{4}-\frac{p_1p_3}{3}+\frac{p_1^2p_2}{12}\right)-\left(\frac{p_3}{6}-\frac{p_1p_2}{24}\right)^2\nonumber\\
&\quad -\frac{p_1}{2}\left[\frac{p_1}{16}\left(p_4-\frac{p_2^2}{4}-\frac{p_1p_3}{3} +\frac{p_1^2p_2}{12}\right)-\frac{p_2}{4}\left(\frac{p_3}{6}-\frac{p_1p_2}{24}\right)\right]\nonumber\\
&\quad +\frac{p_2}{4}\left[\frac{p_1}{2}\left(\frac{p_3}{6}-\frac{p_1p_2}{24}\right)-\left(\frac{p_2}{4}\right)^2\right]\nonumber\\
&=\frac{p_4}{32}\left(p_2-p_1^2\right)-\frac{3p_2^2}{128}+\frac{13p_1p_2p_3}{288}-\frac{p_1^2p_2^2}{576}-\frac{p_3^2}{36} +\frac{p_1^3p_3}{96}-\frac{p_1^4p_2}{384}.\nonumber
\end{align}
Since the functional $|H_{3,1}(f)|$ is rotationally invariant, without loss of generality we may take $a_2=\frac{p_1}{2}\ge0$. On the other hand, from Lemma \ref{Q-75}, there exists $z_1,z_2,z_3,z_4\in\mathbb{\overline{D}}$ with $z_1\ge 0$ such that
\begin{align*}
\frac{p_4}{4}\left(p_2-p_1^2\right)
&=-z_1^6-2z_1^4z_2(1-z_1^2)+2z_1^2z_2^2(1-z_1^2)+z_2^3(1-z_1^2)(1-4z_1^2+2z_1^4)\nonumber\\
&\quad + z_1^2z_2^4(1-z_1^2)^2-2z_3(1-z_1^2)(1-|z_2|^2)\left(z_1^3-z_1z_2+z_1z_2^3(1-z_1^2)\right)\nonumber\\
&\quad +z_3^2(1-z_1^2)(1-|z_2|^2)\left(z_1^2\bar{z_2}-|z_2|^2(1-z_1^2)\right)\nonumber\\
&\quad -z_4(1-z_1^2)(1-|z_2|^2)(1-|z_3|^2)\left(z_1^2-z_2(1-z_1^2)\right),\nonumber\\
\frac{p_1p_2p_3}{8}
&=z_1^6+3z_1^4z_2(1-z_1^2)+z_2^2(1-z_1^2)(2z_1^2-3z_1^4)-z_1^2z_2^3(1-z_1^2)^2\nonumber\\
&\quad +z_3(1-z_1^2)(1-|z_2|^2)\left(z_1^3+z_1z_2(1-z_1^2)\right),\\
\frac{p_1^3p_3}{16}
&=z_1^6+2z_1^4z_2(1-z_1^2)-z_1^4z_2^2(1-z_1^2)+z_1^3z_3(1-z_1^2)(1-|z_2|^2),\nonumber\\
\frac{p_3^2}{4}
&= z_1^6+4z_1^4z_2(1-z_1^2)+2z_2^2(1-z_1^2)(2z_1^2-3z_1^4)-4z_1^2z_2^3(1-z_1^2)^2\nonumber\\
&\quad +z_1^2z_2^4(1-z_1^2)^2 +(1-z_1^2)^2(1-|z_2|^2)^2z_3^2 \nonumber\\
&\quad +2z_3(1-z_1^2)(1-|z_2|^2)\left(z_1^3+2z_1z_2(1-z_1^2)-z_1z_2^2(1-z_1^2)\right),\nonumber\\
\frac{p_2^3}{8}& =z_1^6+3z_1^4z_2(1-z_1^2)+3z_1^2z_2^2(1-z_1^2)^2+z_2^3(1-z_1^2)^3,\nonumber\\
\frac{p_1^2p_2^2}{16}& =z_1^6+2z_1^4z_2(1-z_1^2)+z_1^2z_2^2(1-z_1^2)^2,\nonumber\\
\frac{p_1^4p_2}{32}& =z_1^6+z_1^4z_2(1-z_1^2).\nonumber
\end{align*}
Substituting these values in \eqref{Q-95} and then simplifying, we get
\begin{align}\label{Q-100}
H_{3,1}(f)
&=-\frac{z_1^6}{144}+(1-z_1^2)\left(\frac{z_1^4z_2}{48}+\frac{9-z_1^2}{144}z_1^2z_2^2-\frac{z_1^4+2z_1^2+3}{48}z_2^3 \right.\\
&  +\frac{1-z_1^2}{72}z_1^2z_2^4-\frac{1-|z_2|^2}{36} \left(2z_1^2+(6+3z_1^2)z_2-(1-z_1^2)z_2^2\right)z_1z_3\nonumber\\
&+\frac{9z_1^2\bar{z_2}-(1-z_1^2)(8+|z_2|^2)}{72}(1-|z_2|^2)z_3^2\nonumber\\
& \left.-\frac{z_4}{8}(1-|z_2|^2)(1-|z_3|^2)\left(z_1^2-z_2(1-z_1^2)\right)\right)\nonumber\\
&=\frac{1}{144}\left[h_1(z_1,z_2)+h_2(z_1,z_2)z_3+h_3(z_1,z_2)z_3^2+(1-|z_3|^2)h_4(z_1,z_2)z_4\right],\nonumber
\end{align}
where
\begin{align*}
\begin{cases}
h_1(z_1,z_2)&=-z_1^6+z_1^4z_2(1-z_1^2)+z_1^2z_2^2(1-z_1^2)(9-z_1^2)\\
&\quad\quad-3z_2^3(1-z_1^2)(z_1^4+2z_1^2+3)
+2z_1^2z_2^4(1-z_1^2)^2,\nonumber\\
h_2(z_1,z_2)&=-4(1-z_1^2)(1-|z_2|^2)\bigl(2z_1^3+z_1z_2(6+3z_1^2)-z_1z_2^2(1-z_1^2)\bigr),\nonumber\\
h_3(z_1,z_2)&=2(1-z_1^2)(1-|z_2|^2)\bigl(9z_1^2\bar{z_2}-(1-z_1^2)(8+|z_2|^2)\bigr),\nonumber\\
h_4(z_1,z_2,z_3)&=18(1-z_1^2)(1-|z_2|^2)\bigl(-z_1^2+z_2(1-z_1^2)\bigr).\nonumber
\end{cases}
\end{align*}
Since $z_3,z_4\in\overline{\mathbb{D}}$, it follows from \eqref{Q-100} that
\begin{align}\label{Q-105}
|H_{3,1}(f)|&\leq\frac{1}{144}\left[|h_1(z_1,z_2)|+|h_2(z_1,z_2)||z_3|+|h_3(z_1,z_2)||z_3|^2+(1-|z_3|^2)|h_4(z_1,z_2)|\right]\\&\leq \frac{1}{144}\max\left\{\Gamma(x,y,u):(x,y,u)\in\mathrm{D}=[0,1]^3\right\},\nonumber
\end{align}
where
\begin{align*}
\Gamma(x,y,u)=\Gamma_1(x,y)+\Gamma_4(x,y)+\Gamma_2(x,y)u+(\Gamma_3(x,y)-\Gamma_4(x,y))u^2,
\end{align*}
with
\begin{align*}
\Gamma_1(x,y)&=x^6+x^4(1-x^2)y+x^2(1-x^2)(9-x^2)y^2\\
&\quad+3(1-x^2)(x^4+2x^2+3)y^3+2x^2(1-x^2)^2y^4,\\
\Gamma_2(x,y)&=4x(1-x^2)(1-y^2)\left(2x^2+(6+3x^2)y+(1-x^2)y^2\right),\\
\Gamma_3(x,y)&=2(1-x^2)(1-y^2)\left(9x^2y+(1-x^2)(8+y^2)\right),\\
\Gamma_4(x,y)&=18(1-x^2)(1-y^2)\left(x^2+(1-x^2)y\right).
\end{align*}
Since $(x,y,u)\in\mathrm{D}=[0,1]^3$, it follows that, $\Gamma_i(x,y)\geq 0$, for $i=1,2,3$ and $4$. Now, we will complete the proof by considering two  different cases.\\

\textbf{Case-1}: Let $\Gamma_3(x,y)>\Gamma_4(x,y).$ Then
\begin{align}
\Gamma(x,y,u)&=\Gamma_1(x,y)+\Gamma_4(x,y)+\Gamma_2(x,y)u+(\Gamma_3(x,y)-\Gamma_4(x,y))u^2\nonumber\\
&\leq \Gamma_1(x,y)+\Gamma_2(x,y)+\Gamma_3(x,y)\nonumber\\
&=x^6-8x^5+16x^4+8x^3-32x^2+16\nonumber\\
&\quad\quad-(x^6+12x^5+17x^4+12x^3-18x^2-24x)y\nonumber\\
&\quad\quad+(x^6+12x^5-24x^4-16x^3+37x^2+4x-14)y^2\nonumber\\
&\quad\quad-(3x^6-12x^5-15x^4-12x^3+21x^2+24x-9)y^3\nonumber\\
&\quad\quad+(2x^6-4x^5-6x^4+8x^3+6x^2-4x-2)y^4\nonumber\\
&=R(x,y)\nonumber.
\end{align}
To find the maximum of $R(x,y)$, we first find the critical points of $R(x,y)$. A simple calculation gives
\begin{align}
\frac{\partial R}{\partial x}=&-4y^4-24y^3+4y^2+24+2(6y^4-21y^3+37y^2+18y-32)x\nonumber\\&+12(2y^4+3y^3-4y^2-3y+2)x^2-4(6y^4-15y^3+24y^2+17y-16)x^3\nonumber\\&-20(y^4-3y^3-3y^2+3y+2)x^4+6(2y^4-3y^3+y^2-y+1)x^5,\nonumber\\
\frac{\partial R}{\partial y}=&-8y^3+27y^2-28y-8(2y^3-9y^2-y-3)x\nonumber\\&+(24y^3-63y^2+74+18)x^2+4(8y^3+9y^2-8y-3)x^3\nonumber\\&-(24y^3-45y^2+48y+17)x^4\nonumber\\&-4(4y^3-9y^2-6y+3)x^5+(8y^3-9y^2+2y-1)x^6\nonumber.
\end{align}
The numerical solutions of $\frac{\partial R}{\partial x}=0,~\frac{\partial R}{\partial y}=0$ are given by
\begin{align*}
&(x=0,~y=0),~(x\approx -0.72712,~y\approx -20.692),~(x\approx 1.33674,~y\approx 16.0303),\\&(x\approx -1.70703,~y\approx 4.22166),~(x\approx 1.001,~y\approx -1.18779),\\&(x\approx -1.001,~y\approx 0.743482),~(x\approx -0.157665,~y\approx 0.966163),\\&(x\approx -0.871239,~y\approx 0.326156),~(x\approx 1.001,~y\approx -0.091409),\\&(x\approx 1.001,~y\approx 1.2792),~(x\approx -0.960902,~y\approx -0.548902).
\end{align*}
Thus, $R$ has no critical points in $(0,1)\times(0,1)$. We now consider the following subcases.\\

\textbf{Subcase-1a}: Let $x=0$ and $0\leq y\leq1$. Then $R(0,y)=16-14y^2+9y^3-2y^4.$ Since,
$$
R'(0,y)=-2y\left(\left((2y-\frac{27}{8}\right)^2+\frac{167}{64}\right)\le 0 \quad \text{for}~~0\le y\le 1,
$$
$R(0,y)$ is decreasing and  so, $R(0,y)\leq R(0,0)=16.$\\

\textbf{Subcase-1b}: Let $x=1$ and $0\leq y\leq1$. Then $R(1,y)=1.$\\

\textbf{Subcase-1c}: Let $y=0$ and $0\leq x\leq1$. Then
$$
R(x,0)=\left(x^3-4 x^2+4\right)^2\le R(0,0)=16$$

\textbf{Subcase-1d}: Let $y=1$ and $0\leq x\leq1$. Then
$$
R(x,1)=-16x^4+8x^2+9\le R\left(\frac{1}{2},1\right)=10.
$$

\textbf{Case-2}: Let $\Gamma_3(x,y)\leq\Gamma_4(x,y)$. Then
\begin{align*}
\Gamma(x,y,u)&\leq\Gamma_1(x,y)+\Gamma_4(x,y)+\Gamma_2(x,y)u\nonumber\\
&\leq\Gamma_1(x,y)+\Gamma_4(x,y)+\Gamma_2(x,y)\nonumber\\
&=18x^2+8x^3-18x^4-8x^5+x^6\\
&\quad+(18+24x-36x^2-12x^3+19x^4-12x^5-x^6)y\\
&\quad+(4x-9x^2-16 x^3+8x^4+12x^5+x^6)y^2\\
&\quad+(-9-24x+33x^2+12x^3-21x^4+12x^5-3x^6)y^3\\
&\quad+(-4x+2x^2+8x^3-4x^4-4x^5+2x^6)y^4\\&= S(x,y).
\end{align*}
To find the maximum of $S(x,y)$, we first find the critical points of $S(x,y)$. A simple calculation gives
\begin{align*}
\frac{\partial S}{\partial x}=~&24y(1-y^2)+ 4y^2(1-y^2)\\&+x(18y^2-6y^3+4y^4+36(1-y^2)-72y(1-y^2))\\
&+x^2(24(1-y^2)-36y(1-y^2)-24y^2(1-y^2))\\
&+x^3(4y-40y^2-12y^3-16y^4-72(1-y^2)+72y(1-y^2))\\
&+x^4(-40(1-y^2)-60y(1-y^2)+20y^2(1-y^2))\\&+x^5(6-6y+6y^2-18y^3+12y^4),\\
\frac{\partial S}{\partial y}=&-9y^2+18(1-y^2)\\&+x(-48y^2-8y^3+24(1-y^2)+8y(1-y^2))\\
&+x^2(-18y+63y^2+8y^3-36(1-y^2))\\
&+x^3(-16y+24y^2+16y^3-12(1-y^2)-16y(1-y^2))\\
&+x^4(1+16y-45y^2-16y^3+18(1-y^2))\\
&+x^5(16y+24y^2-8y^3-12(1-y^2)+8y(1-y^2))\\
&+ x^6(-1+2y-9y^2+8y^3),
\end{align*}
The numerical solution of $\frac{\partial S}{\partial x}=0,~\frac{\partial S}{\partial y}=0$ in $(0,1)\times(0,1)$ is $(x\approx0.529019 ,y\approx0.681474)$. But at this point $\Gamma_3(x,y)-\Gamma_4(x,y)\approx 0.676099>0$. Thus, $S$ has no critical points. We now consider the following subcases.\\

%$$
%\frac{\partial^2 S}{\partial x^2}\approx -106.965<0 \\ \quad \text{and} \quad \frac{\partial^2 S}{\partial x^2}\frac{\partial^2 S}{\partial y^2}-\frac{\partial^2 S}{\partial x\partial y}\approx 3598.45>0
%.$$ Thus $ S(x,y)$ has maximum value at $(x\approx0.529019 ,y\approx0.681474)$, and so  $$ S(x,y)\leq S(0.529019,0.681474)\approx12.1998.$$
%Now, we find the maximum value of $ S(x,y)$ on the boundary of  $[0,1]\times[0,1]$. For this, we consider the following subcases:\\

\textbf{Subcase-2a}: Let $x=0$ and $0\leq y\leq1$. Then
$$ S(0,y)=9y(2-y^2)\leq S\left(0,\frac{2}{3}\right)=\frac{28}{3}\approx 9.333.$$

\textbf{Subcase-2b}: Let $x=1$ and $0\leq y\leq1$. Then $ S(1,y)=1$.\\

\textbf{Subcase-2c}: Let $y=0$ and $0\leq x\leq1$. Then
$$
 S(x,0)=18x^2(1-x^2)+8x^3(1-x^2)+x^6 \le 18\times \frac{1}{4} +8\times \frac{1}{4}+1= \frac{19}{2}.
$$
where we have used the fact that $x^2(1-x^2)\le1/4$ in $[0,1]$.\\

\textbf{Subcase-2d}: Let $y=1$ and $0\leq x\leq1$. Then $$ S(x,1)=9+8x^2-16x^4\leq S\left(\frac{1}{2},1\right)=10.$$

By combining above two cases, the maximum value of $\Gamma(x,y,u)$ is $16$. Hence from \eqref{Q-105} we get
\begin{align*}
|H_{3,1}(f)|&\leq\frac{16}{144}=\frac{1}{9}.
\end{align*}
The above inequality is sharp for the function $f_4(z)=ze^{\frac{z^3}{3}}$.

\end{proof}

%%%%%%%%%%%%%%%%%%%%%%%%%%%%%%%%%%%%%%%%%%%%%%%%%%%%%%%%%%%%%%%%%%%%%%%%%%%%%%%%%%%%%%%%%%%%%%%%%%%%%%%%%%%%%%%%%%%%%%%%%%%%%%%%%%%%%%%%%%%%%%%%%%%%%%%%%%%%%%%%%%%%%%%%%%%

In our next theorem, we obtain the sharp bound of Hankel determinant for the logarithmic coefficients of functions in the class $\mathcal{S}_u^*$.

\begin{thm}
Let $f\in \mathcal{S}_u^*$ be of the form \eqref{Q-01} and $\mathrm{F}_f$ is given by \eqref{Q-30}. Then
$$\left|H_{2,1}\left(\frac{\mathrm{F}_f}{2}\right)\right|\leq\frac{1}{16},$$
and the estimate is sharp.
\end{thm}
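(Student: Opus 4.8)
The plan is to piggyback on the computation already performed for Theorem~2.1. Combining the identity \eqref{Q-45}, namely $4H_{2,1}(\mathrm{F}_f/2)=H_{2,2}(f)+\tfrac{1}{12}a_2^4$, with the coefficient relations \eqref{Q-85}, I would first express $H_{2,1}(\mathrm{F}_f/2)$ purely in terms of $p_1,p_2,p_3$. Since the functional is rotationally invariant, I may assume $a_2=p_1/2=z_1\ge 0$ and substitute the parametrization of Lemma~2.1. The crucial observation is that in the proof of Theorem~2.1 the substitution produced $H_{2,2}(f)=\tfrac{1}{12}\bigl(4(1-|z_2|^2)(1-z_1^2)z_1z_3-z_1^4-(3+z_1^2)(1-z_1^2)z_2^2\bigr)$, while $\tfrac{1}{12}a_2^4=\tfrac{1}{12}z_1^4$ (because $a_2=p_1/2=z_1$); hence the two $z_1^4$ terms cancel exactly, leaving
$$H_{2,1}\!\left(\frac{\mathrm{F}_f}{2}\right)=\frac{1}{48}\Bigl(4(1-|z_2|^2)(1-z_1^2)z_1z_3-(3+z_1^2)(1-z_1^2)z_2^2\Bigr).$$

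Next I would apply the triangle inequality and use $|z_3|\le 1$. Writing $x=z_1\in[0,1]$ and $t=|z_2|\in[0,1]$, this yields
$$\left|H_{2,1}\!\left(\frac{\mathrm{F}_f}{2}\right)\right|\le\frac{1-x^2}{48}\Bigl(4x(1-t^2)+(3+x^2)t^2\Bigr)=\frac{1-x^2}{48}\Bigl(4x+(3-4x+x^2)t^2\Bigr).$$
The key positivity fact is $3-4x+x^2=(1-x)(3-x)\ge 0$ on $[0,1]$, so the right-hand side is nondecreasing in $t^2$ and is maximized at $t=1$, giving the bound $\tfrac{1}{48}(1-x^2)(3+x^2)=\tfrac{1}{48}(3-2x^2-x^4)$. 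This single-variable function is clearly decreasing on $[0,1]$, so its maximum is attained at $x=0$ with value $3$, whence $\left|H_{2,1}(\mathrm{F}_f/2)\right|\le 3/48=1/16$.

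Finally, for sharpness I would exhibit $f_3(z)=ze^{z^2/2}$, for which $\log(f_3(z)/z)=z^2/2$ forces $\gamma_1=\gamma_3=0$ and $\gamma_2=1/4$, so that $H_{2,1}(\mathrm{F}_f/2)=\gamma_1\gamma_3-\gamma_2^2=-1/16$. I expect essentially no obstacle here: unlike the third-order case, the optimization is two-dimensional and monotone, so no critical-point hunt is needed. The only point worth flagging is that applying the triangle inequality directly to \eqref{Q-45} together with $|H_{2,2}(f)|\le 1/4$ and $|a_2|\le 1$ is too crude---it only yields $1/12$---because the two summands cannot be extremal simultaneously; it is precisely the joint substitution, and the $z_1^4$ cancellation it reveals, that makes the sharp constant $1/16$ accessible.
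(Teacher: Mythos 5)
Your proof is correct and follows essentially the same route as the paper: both arguments reduce, via Lemma \ref{Q-75} and rotational invariance, to the identical parametrized expression $\tfrac{1-z_1^2}{48}\bigl|4(1-|z_2|^2)z_1z_3-(3+z_1^2)z_2^2\bigr|$ and then run the same two-step monotone optimization (first in $|z_2|$ using $(1-z_1)(3-z_1)\ge0$, then in $z_1$), with the same extremal function $f_3(z)=ze^{z^2/2}$. The only difference is bookkeeping: the paper substitutes \eqref{Q-85} directly into $\tfrac14\left|a_2a_4-a_3^2+\tfrac{1}{12}a_2^4\right|$, whereas you recycle the parametrization of $H_{2,2}(f)$ from Theorem 2.1 through the identity \eqref{Q-45}, the $z_1^4$ cancellation you highlight being exactly the added term $\tfrac{1}{12}a_2^4=\tfrac{1}{12}z_1^4$.
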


\begin{proof}
Let $f\in \mathcal{S}_u^*$ be of the form \eqref{Q-01}. From \eqref{Q-40}, we have
$$\left|H_{2,1}\left(\frac{\mathrm{F}_f}{2}\right)\right| =|\gamma_1\gamma_3-\gamma_2^2|=\frac{1}{4}\left|a_2a_4-a_3^2+\frac{1}{12}a_2^4\right|.$$
Then using the relations \eqref{Q-85}, we obtain
\begin{align*}
\left|H_{2,1}\left(\frac{\mathrm{F}_f}{2}\right)\right|
&=\frac{1}{4}\left|\frac{p_1p_3}{12}-\frac{p_1^2p_2}{48}-\frac{p_2^2}{16}+\frac{p_1^4}{192}\right|.
\end{align*}
Since the functional $\left|H_{2,1}\left(\frac{\mathrm{F}_f}{2}\right)\right|$ is rotationally invariant, without loss of generality we may take $a_2=\frac{p_1}{2}\ge0$. On the other hand, from Lemma \ref{Q-75}, there exists $z_1,z_2,z_3\in\mathbb{\overline{D}}$ with $z_1\ge 0$ such that
\begin{align*}
\left|H_{2,1}\left(\frac{\mathrm{F}_f}{2}\right)\right|
&=\frac{(1-z_1^2)}{12}\left|(1-|z_2|^2)z_1z_3+\frac{z_1^2+3}{4}z_2^2\right|\\
&\le\frac{(1-z_1^2)}{12}\left((1-|z_2|^2)z_1|z_3|+\frac{z_1^2+3}{4}|z_2|^2\right)\\
&\le\frac{(1-z_1^2)}{12}\left((1-|z_2|^2)z_1+\frac{z_1^2+3}{4}|z_2|^2\right)\\
&=\frac{(1-z_1^2)}{12}\left((z_1+\frac{(3-z_1)(1-z_1)}{4}|z_2|^2\right)\\
&\le\frac{(3+z_1^2)(1-z_1^2)}{48}\le\frac{1}{16}.
\end{align*}
The above estimate is sharp for the function $f_3(z)=ze^\frac{z^2}{2}$.
\end{proof}

In the next theorem, we obtain the sharp bound of the Hankel determinant for the inverse functions of functions in the class $\mathcal{S}_u^*$.

\begin{thm}
Let $f\in \mathcal{S}_u^*$ be of the form \eqref{Q-01} and $f^{-1}$ be the inverse of $f$ of the form \eqref{Q-50}. Then
$$\left|H_{2,2}(f^{-1})\right|\leq\frac{5}{12},$$
and the estimate is sharp.
\end{thm}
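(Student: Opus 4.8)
The plan is to reduce $H_{2,2}(f^{-1})$ to the Carath\'eodory coefficients $p_1,p_2,p_3$ and then run the same extremization scheme used in the earlier proofs. Starting from \eqref{Q-55}, namely $H_{2,2}(f^{-1})=a_2a_4-a_2^2a_3+a_2^4-a_3^2$, I would substitute the relations \eqref{Q-85} to obtain
$$H_{2,2}(f^{-1})=\frac{1}{48}\bigl(4p_1p_3-4p_1^2p_2+3p_1^4-3p_2^2\bigr).$$
Since the functional is rotationally invariant, I may take $a_2=p_1/2\ge 0$, so that in Lemma \ref{Q-75} the parameter $z_1$ is real and nonnegative; only the first three coefficients appear, so $z_4$ is not needed.

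Inserting the representations of $p_1,p_2,p_3$ from Lemma \ref{Q-75} and collecting terms, I expect the $z_3$ contribution to enter only linearly, with coefficient proportional to $z_1(1-z_1^2)(1-|z_2|^2)$, yielding after simplification
$$H_{2,2}(f^{-1})=\frac{1}{12}\Bigl[5z_1^4-6z_1^2(1-z_1^2)z_2-(1-z_1^2)(z_1^2+3)z_2^2+4z_1(1-z_1^2)(1-|z_2|^2)z_3\Bigr].$$
Applying the triangle inequality together with $|z_3|\le 1$ and writing $x=z_1\in[0,1]$, $y=|z_2|\in[0,1]$ reduces the problem to maximizing
$$\Psi(x,y)=5x^4+6x^2(1-x^2)y+(1-x^2)(x^2+3)y^2+4x(1-x^2)(1-y^2)$$
over the square $[0,1]^2$; the target is $\max\Psi=5$, which delivers the bound $5/12$.

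The key simplification that avoids a messy critical-point search is to group $\Psi(x,y)=5x^4+(1-x^2)g(x,y)$ with $g(x,y)=6x^2y+(x-1)(x-3)y^2+4x$. On $[0,1]^2$ one has $(x-1)(x-3)\ge 0$, so $\partial g/\partial y=6x^2+2(x-1)(x-3)y\ge 0$, and hence $\partial\Psi/\partial y=(1-x^2)\,\partial g/\partial y\ge 0$. Thus $\Psi$ is nondecreasing in $y$, and it suffices to examine the edge $y=1$, where $\Psi(x,1)=-2x^4+4x^2+3$. Writing $t=x^2\in[0,1]$, the quadratic $-2t^2+4t+3$ has its vertex at $t=1$ and is increasing on $[0,1]$, so its maximum is $\Psi(1,1)=5$. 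This gives $|H_{2,2}(f^{-1})|\le 5/12$.

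Finally, I would confirm sharpness with $f_2(z)=ze^z$, for which $a_2=1$, $a_3=1/2$, $a_4=1/6$, so a direct substitution in \eqref{Q-55} gives $H_{2,2}(f_2^{-1})=\tfrac16-\tfrac12+1-\tfrac14=\tfrac{5}{12}$; this is exactly the extreme configuration $z_1=1$ (equivalently $p(z)=(1+z)/(1-z)$) that attains $\Psi=5$. I expect the only genuinely laborious step to be the coefficient substitution producing the displayed expression for $H_{2,2}(f^{-1})$. Once that is in hand, the monotonicity-in-$y$ observation makes the optimization entirely routine, in pleasant contrast to the numerical critical-point analysis required for the third-order determinant.
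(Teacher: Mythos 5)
Your proposal is correct and follows essentially the same route as the paper: both reduce $H_{2,2}(f^{-1})$ to $p_1,p_2,p_3$ via \eqref{Q-85}, invoke rotation invariance and Lemma \ref{Q-75} with $z_1\ge 0$, apply the triangle inequality with $|z_3|\le 1$, and then observe that the resulting expression is nondecreasing in $|z_2|$ (the paper does this by noting the coefficients $6z_1^2$ and $(1-z_1)(3-z_1)$ are nonnegative, which is exactly your $\partial g/\partial y\ge 0$ computation), arriving at the same polynomial $-2z_1^4+4z_1^2+3\le 5$ and the same extremal function $f_2(z)=ze^z$. Your exact intermediate representation and sharpness verification also agree with the paper's, so there is nothing to add.
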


\begin{proof}
Let $f\in \mathcal{S}_u^*$ be of the form \eqref{Q-01}. From \eqref{Q-55}, we have
$$
\left|H_{2,2}(f^{-1})\right|=\left|A_2A_4-A_3^2\right|=\left|a_2a_4-a_2^2a_3+a_2^4-a_3^2\right|.
$$
By using the relations \eqref{Q-85}, we obtain
$$
\left|H_{2,2}(f^{-1})\right|=\left|\frac{p_1p_3}{12}-\frac{p_1^2p_2}{12}+\frac{p_1^4}{16}-\frac{p_2^2}{16}\right|
$$
Since the functional $|H_{2,2}(f^{-1})|$ is rotationally invariant, without loss of generality we may take $a_2=\frac{p_1}{2}\ge0$. On the other hand, from Lemma \ref{Q-75}, there exists $z_1,z_2,z_3\in\mathbb{\overline{D}}$ with $z_1\ge 0$ such that
\begin{align*}
\left|H_{2,2}(f^{-1})\right|\le&\frac{5}{12}z_1^4+\frac{(1-z_1^2)}{12}\left(4(1-|z_2|^2)z_1|z_3|+6z_1^2|z_2|+(3+z_1^2)|z_2|^2\right)\\
\le&\frac{5}{12}z_1^4+\frac{(1-z_1^2)}{12}\left(4(1-|z_2|^2)z_1+6z_1^2|z_2|+(3+z_1^2)|z_2|^2\right)\\
%=&\frac{5}{12}z_1^4+\frac{(1-z_1^2)}{12}\left(4z_1+6z_1^2|z_2|+(3+z_1^2-4z_1)|z_2|^2\right)\\
=&\frac{5}{12}z_1^4+\frac{(1-z_1^2)}{12}\left(4z_1+6z_1^2|z_2|+(1-z_1)(3-z_1)|z_2|^2\right)\\
\le&\frac{5}{12}z_1^4+\frac{(1-z_1^2)}{12}\left(4z_1+6z_1^2+(1-z_1)(3-z_1)\right)\\
%=&\frac{5}{12}z_1^4+\frac{(1-z_1^2)(3+7z_1^2)}{12}\\
=&\frac{1}{12}(3+4z_1^2-2z_1^4)\\
\le&\frac{5}{12}.
\end{align*}
The above inequality is sharp for the function $f_2(z)=ze^z$.
\end{proof}

The relations between Hankel determinant of the function $f$, logarithmic function $\mathrm{F}_f$ and inverse function $f^{-1}$ when $f\in\mathcal{S}_u^*$ are as follows:\\

\noindent\textbf{(i)}   From \eqref{Q-45}, we get
\begin{align*}
\left|4H_{2,1}\left(\frac{\mathrm{F}_f}{2}\right)-H_{2,2}(f)\right|=\frac{1}{12}|a_2|^4=\frac{1}{12}\left|\left(\frac{p_1}{2}\right)^4\right|\le\frac{1}{12},
\end{align*}
and the inequality is sharp.
The extremal function is $f_2(z)=ze^z$ for which $H_{2,1}\left(\frac{\mathrm{F}_f}{2}\right)=0$ and $H_{2,2}(f)=\frac{1}{12}$.\\

\noindent\textbf{(ii)}   From \eqref{Q-60}, we get
\begin{align*}
\left|H_{2,2}(f^{-1})-H_{2,2}(f)\right|=|a_2|^2|a_3-a_2^2|=\frac{|p_1|^2|p_2-p_1^2|}{16}\le\frac{1}{2},
\end{align*}
and the inequality is sharp.
The equality holds for the function $f_2(z)=ze^{z}$ for which $H_{2,2}(f^{-1})=\frac{5}{12}$ and $H_{2,2}(f)=-\frac{1}{12}$.\\

%%%%%%%%%%%%%%%%%%%%%%%%%%%%%%%%%%%%%%%%%%%%%%%%%%%%%%%%%%%%%%%%%%%%%%%%%%%%%%%%%%%%%%%%%%%%%%%%%%%%%%%%%%%%%%%%%%%%%%%%%%%%%%%%%%%%%%%%%%%%%%%%%%%%%%%%%%%%%%%%%
In our next theorem, we obtain the estimate of successive coefficients for functions in the class $\mathcal{S}_u^*$.

\begin{thm}\label{Q-110}
Let $f\in \mathcal{S}_u^*$ be of the form \eqref{Q-01}. Then
$$ -\frac{1}{n-1}\leq|a_{n+1}|-|a_n|\leq \frac{1}{n}\quad\text{for}~n\geq 2.$$
Both the inequalities are sharp.
\end{thm}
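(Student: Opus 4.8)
The plan is to obtain both bounds directly from the sharp coefficient estimate \eqref{Q-15}, namely $|a_n|\le \frac{1}{n-1}$ for $n\ge 2$, together with the trivial fact that moduli are nonnegative. The key simplification I would exploit is that this estimate is already established and the class is rotationally invariant, so none of the heavier machinery used in the earlier theorems (Lemma \ref{Q-75} and the Carath\'eodory parametrization) should be required for the inequalities themselves.

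First I would treat the upper bound. Applying \eqref{Q-15} with index $n+1$ gives $|a_{n+1}|\le \frac{1}{n}$, and since $|a_n|\ge 0$ we get immediately
$$|a_{n+1}|-|a_n|\le |a_{n+1}|\le \frac{1}{n}.$$
For the lower bound I would instead apply \eqref{Q-15} with index $n$ to obtain $|a_n|\le \frac{1}{n-1}$, whence
$$|a_{n+1}|-|a_n|\ge -|a_n|\ge -\frac{1}{n-1}.$$
Combining these, for every $n\ge 2$ one has $-\frac{1}{n-1}\le -|a_n|\le |a_{n+1}|-|a_n|\le |a_{n+1}|\le \frac{1}{n}$, which is the assertion.

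The only part needing genuine care is sharpness, and here I would use the functions $f_m(z)=z e^{z^{m-1}/(m-1)}$. These lie in $\mathcal{S}_u^*$ because $z f_m'(z)/f_m(z)=1+z^{m-1}$ satisfies \eqref{Q-10}, and expanding shows the only nonzero Taylor coefficients of $f_m$ occur at the powers $1,\,m,\,2m-1,\dots$, so every coefficient strictly between $z$ and $z^m$ vanishes. For the upper bound I would take $f_{n+1}(z)=z e^{z^{n}/n}$, for which $a_n=0$ and $a_{n+1}=\frac{1}{n}$, giving $|a_{n+1}|-|a_n|=\frac1n$. For the lower bound I would take $f_n(z)=z e^{z^{n-1}/(n-1)}$; when $n\ge 3$ the power $z^{n+1}$ does not appear (as $n/(n-1)$ is not an integer), so $a_n=\frac{1}{n-1}$ and $a_{n+1}=0$, giving $|a_{n+1}|-|a_n|=-\frac{1}{n-1}$.

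I expect the one subtle point to be the borderline case $n=2$ of the lower bound: there $f_2(z)=ze^{z}$ gives $a_2=1$, $a_3=\tfrac12$, so $|a_3|-|a_2|=-\tfrac12>-1$, i.e.\ this candidate does \emph{not} attain $-\frac{1}{n-1}=-1$. I would therefore read ``sharp'' as meaning that the constants $\frac1n$ and $-\frac{1}{n-1}$ cannot be improved as functions of $n$, with the lower-bound extremal exhibited for $n\ge 3$. If equality were instead demanded for every $n$, one would have to confront the $n=2$ case separately: a short computation with the parametrization of Lemma \ref{Q-75}, writing $|a_2|=z_1$ and $|a_3|=\tfrac12|z_1^2+(1-z_1^2)z_2|$ and minimizing over $z_2\in\overline{\mathbb{D}}$, indicates the true minimum of $|a_3|-|a_2|$ is $-\tfrac{1}{\sqrt 2}$, so the stated bound $-1$ is simply not attained at $n=2$.
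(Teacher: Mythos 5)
Your proof is essentially identical to the paper's: both deduce the two inequalities directly from the coefficient bound \eqref{Q-15} via $|a_{n+1}|-|a_n|\le|a_{n+1}|\le\frac1n$ and $|a_{n+1}|-|a_n|\ge-|a_n|\ge-\frac{1}{n-1}$, and both cite the same extremal functions $f_{n+1}(z)=ze^{z^n/n}$ and $f_n(z)=ze^{z^{n-1}/(n-1)}$. Your closing observation is, however, a genuine refinement that the paper glosses over: for $n=2$ the function $f_2(z)=ze^z$ gives $|a_3|-|a_2|=-\tfrac12$, not $-1$, and since $|a_2|=1$ forces $|a_3|=\tfrac12$ (as $p_2=p_1^2/2$ when $|p_1|=2$), the value $-1$ is unattainable; your computation that the true minimum of $|a_3|-|a_2|$ over the class is $-\tfrac{1}{\sqrt2}$ is correct, so the paper's sharpness claim for the lower bound should indeed be read as holding only for $n\ge3$.
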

\begin{proof}
Let $f\in \mathcal{S}_u^*$ be of the form \eqref{Q-01}. Then, from \eqref{Q-15} we have
Since $f(z)=z+\sum_{n=2}^{\infty}a_n z^n\in \mathcal{S}_u^*$ so we have
 $$|a_n|\leq\frac{1}{n-1}\quad\text{for }n\geq 2.$$
 A simple computation gives
 $$ -\frac{1}{n-1}\leq|a_{n+1}|-|a_n|\leq \frac{1}{n}\quad\text{for}~n\geq 2.$$
  Clearly,
$$|a_{n+1}|-|a_n|\leq|a_{n+1}|\leq\frac{1}{n},$$
and
$$
\quad|a_{n+1}|-|a_n|\geq-|a_n|\geq-\frac{1}{n-1}.
$$
The left hand side inequality is sharp for the function $f_n(z)=z\exp(\frac{z^{n-1}}{n-1})$ and the right hand side inequality is sharp for the function $f_{n+1}(z)=z\exp(\frac{z^n}{n})$.

\end{proof}

\vspace{4mm}
\noindent\textbf{Declarations}\\

\noindent\textbf{Conflict of interest:} The authors declare that they have no conflict of interest.\\

\noindent\textbf{Data availability:}
Data sharing not applicable to this article as no data sets were generated or analyzed during the current study.\\

\noindent\textbf{Authors Contributions:}
All authors contributed equally to the investigation of the problem and the order of the authors is given alphabetically according to their surname. All authors read and approved the final manuscript. \\

%\noindent\textbf{Acknowledgement:}
%The authors thank the institute National Institute of Technology Durgapur for granting financial support to pursue research work.

\end{document}